\providecommand{\keywords}[1]
{
  \small	
  \textbf{\textit{Keywords---}} #1
}
\newtheorem{theorem}[]{Theorem}
\title{Approximating monomials using Chebyshev polynomials}
\author{Arvind K.\ Saibaba\footnote{Department of Mathematics, North Carolina State University, Raleigh, NC. Email: \url{asaibab@ncsu.edu}.}}
\date{\today}
\begin{document}

\maketitle
\begin{abstract}
    This paper considers the approximation of a monomial $x^n$ over the interval $[-1,1]$ by a lower-degree polynomial. This polynomial approximation can be easily computed analytically and is obtained by truncating the analytical Chebyshev series expansion of $x^n$. The error in the polynomial approximation in the supremum norm has an exact expression with an interesting probabilistic interpretation. We use this interpretation along with concentration inequalities to develop a useful upper bound for the error.  
\end{abstract}
\keywords{Chebyshev polynomials, Polynomial Approximation, Binomial Coefficients, Concentration inequalities}

\section{Motivation and Introduction}\label{sec:intro}
We are interested in approximating the monomial $x^n$ by a polynomial of degree $0 \leq k < n$ over the interval $[-1,1]$. The monomials $1,x,x^2,\dots$ form a basis for $C[-1,1]$, so it seems unlikely that we can represent a monomial in terms of lower degree polynomials. In Figure~\ref{fig:monomialbasis}, we plot a few functions from the monomial basis over $[0,1]$; the basis function look increasingly alike as we take higher and higher powers, i.e., they appear to ``lose independence.'' Numerical analysts often avoid the monomial basis in polynomial interpolation since they result in ill-conditioned Vandermonde matrices, leading to poor numerical performance in finite precision arithmetic. This loss of independence means that it is reasonable to approximate the monomial $x^n$ as a linear combination of lower order monomials, i.e., a lower order polynomial approximation. The natural question to ask, therefore, is: how small can $k$ be so that a well-chosen polynomial of degree $k$ can accurately approximate $x^n$? 

\begin{figure}[!ht]
    \centering
    \includegraphics[scale=0.4]{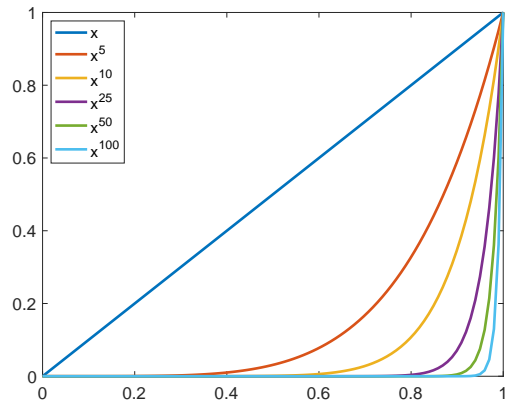}
    \caption{Visualization of a few monomials in the interval $[0,1]$.}
    \label{fig:monomialbasis}
\end{figure}

The surprising answer to this question is that we can approximate the monomial $x^n$ over $[-1,1]$ by a polynomial of small degree, which we will make precise. Let $\|f\|_\infty = \max_{x \in [-1,1]} | f(x)|$ denote the supremum norm on $C[-1,1]$  and let  $\pi_k^*(\cdot)$ be the best polynomial approximation to $x^n$ in this norm; that is
\[ E_{n,k} := \min_{\pi \in \mathcal{P}_k} \|x^n- \pi(x) \|_\infty= \|x^n - \pi_k^*(x)\|_\infty,   \]
where $\mathcal{P}_k$ is a vector space of polynomials with real coefficients of degree at most $k$.  The minimizer $\pi_k^*(\cdot)$ exists and is unique~\cite[Chapter 10]{trefethen2013approximation}, but does not have a closed form expression. Newman and Rivlin~\cite[Theorem 2]{newman1976approximation} showed that\footnote{We briefly mention that the notation our manuscript differs from~\cite{newman1976approximation} in that we reverse the roles of $n$ and $k$.}  
\begin{equation}\label{eqn:newmanrivlin} \frac{p_{n,k}}{4e} \leq \|x^n -  \pi_k^*(x)\|_\infty \leq p_{n,k},  \end{equation}
where the term $p_{n,k}$ is given by the formula  \[ p_{n,k}  = \frac{1}{2^{n-1}} \sum_{j = \lfloor (n+k)/2 \rfloor +1}^n \binom{n}{j}. \]
Since $p_{n,k}$ involves the sum of binomial coefficients, it has a probabilistic interpretation which we explore in Section~\ref{sec:prob}.

To see why a small $k$ is sufficient, consider the upper bound $p_{n,k}$. In Section~\ref{sec:prob} we use the probabilistic interpretation to obtain the following bound  $p_{n,k} \leq 2\exp\left(-{k^2}/{2n}\right)$. Suppose we are given a user-defined tolerance $\epsilon > 0$. To ensure   
\[ \|x^n -  \pi_k^*(x)\|_\infty \leq \epsilon,\]
we need to  choose $k \geq \sqrt{2n\log(2/\epsilon)}$. The accuracy of the polynomial approximation is visualized in Figure~\ref{fig:approx}, where in the left panel we plot the monomial $x^{n}$ for $n=75$ and the best polynomial approximation $\pi_{k}^*$ for $k=5,15,25$. The polynomial $\pi_k^*$ is computed using the Remez algorithm, implemented in chebfun~\cite{Driscoll2014}. We see that for $k=25$, the polynomial approximation looks very accurate. In the right panel, we display $p_{n,k}$, which is the upper bound of the best polynomial approximation, as well as the upper bound for $p_{n,k}$. We see that $p_{n,k}$ and its upper bound both have sharp decay with increasing $k$.  Numerical evidence in~\cite{nakatsukasa2018rational} further confirms this analysis; the authors show that the error $E_{n,k}$ behaves approximately like $\frac12 \text{erfc}(k/\sqrt{n})$, where erfc is the complementary error function. .

\begin{figure}[!ht]
    \centering
    \includegraphics[scale=0.5]{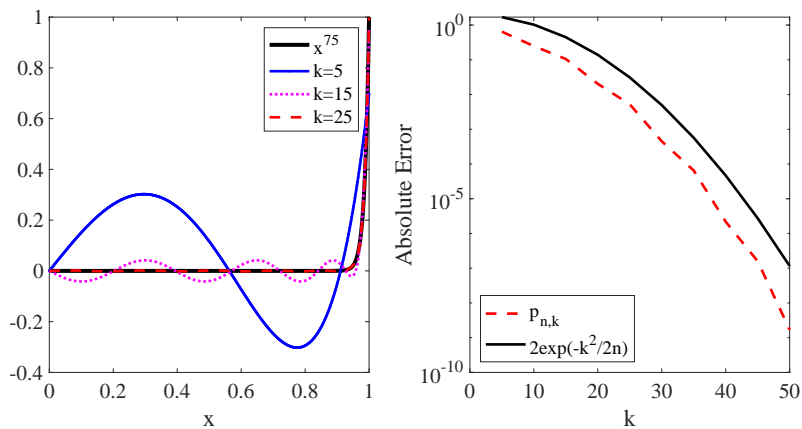}
    \caption{(left) Approximation of the monomial $x^{n}$ for $n=75$ by $\pi_k^*$, (right) $p_{n,k}$ and its upper bound $2\exp(-k^2/2n)$. The visualization is restricted to the interval $[0,1]$.}
    \label{fig:approx}
\end{figure}

 Polynomial and rational approximations to the monomial has received considerable attention in the approximation theory community, and surveys of various results can be found in~\cite{reddy1987approximations,nakatsukasa2018rational}. Polynomial approximations to high order monomials have many applications in numerical analysis.   This key insight was exploited by Cornelius Lanczos~\cite{lanczos1988applied} in his ``$\tau$-method'' for the numerical solution of differential equations. For a simulating discussion on this topic, please see~\cite{nakatsukasa2018rational}. In numerical linear algebra, this has been exploited to efficiently compute matrix powers and the Schatten p-norm of a matrix~\cite{avron2011randomized,dudley2020monte}.

 In this short note, we show to construct a polynomial approximation $x^n \approx \phi_k(x)$ using a truncated Chebyshev polynomial expansion. The error in the truncated representation equals the sum of the discarded coefficients and is precisely $p_{n,k}$. The polynomial $\phi_k$ and the resulting error can both be computed analytically and, therefore, is of great practical use.  We briefly review Chebyshev polynomials in Section~\ref{sec:cheby} and state and prove the main result in Section~\ref{sec:main}. In Section~\ref{sec:prob}, we explore probabilistic interpretations of $p_{n,k}$ and obtain bounds for partial sums of binomial coefficients.

\section{Chebyshev polynomials}\label{sec:cheby}
The Chebyshev polynomials of the first kind $T_n(x)$ for $n=0,1,2,\dots$ can be represented as 
\[T_n(x) = \cos(n\arccos{x}) \qquad x \in [-1,1]. \]
Starting with $T_0(x) = 1$, $T_1(x) = x$, the Chebyshev polynomials satisfy a recurrence relationship of the form  $T_{n+1}(x) = 2xT_n(x) - T_{n-1}(x)$ for $n\geq 1$. The Chebyshev polynomials are orthogonal with respect to the weighted inner product \[\langle u,v \rangle = \int_{-1}^1 w(x) u(x) v(x) dx \] 
where the weight function takes the form $w(x) = (1-x^2)^{-1/2}$. Any function $f \in C[-1,1]$ that is Lipschitz continuous can be represented in terms of a Chebyshev polynomial expansion of the form
\[ f(x) = \frac12 c_0 + \sum_{j=1}^\infty c_jT_j(x) \qquad x \in [-1,1], \]
where the coefficients $c_j$ are obtained as  $c_j = \frac{2}{{\pi}}\langle f(x), T_j(x)\rangle$ and the series is uniformly convergent. The monomial $x^n$ is rather special since it has the following exact representation in terms of the Chebyshev polynomials~\cite[Section 4]{cody1970survey} \begin{equation}\label{eqn:xncheby} x^n = \sum_{j=0}^{n}{}^{'} c_jT_j(x),\end{equation}
where ${}^{'}$ means the summand corresponding to $j=0$ is halved (if it appears) and  the coefficients $c_j$ for $j=0,\dots,n$ are
\begin{equation}\label{eqn:cj} c_j = \left\{ \begin{array}{ll} 2^{1-n}\binom{n}{(n-j)/2}  & n-j \text{ even} \\ 0 & \text{otherwise}.\end{array}  \right.\end{equation}
Equation~\eqref{eqn:xncheby} takes a more familiar form, when we consider the trigonometric perspective of Chebyshev polynomials. For example, the well-known trigonometric identity $\cos(3\theta) = 4\cos^3 \theta - 3\cos \theta$, can be arranged as \[ \cos^3 \theta = \frac{3}{4}\cos\theta + \frac{1}{4} \cos(3\theta) = \frac{1}{2^2} \left( \binom{3}{1} \cos\theta + \binom{3}{0} \cos(3\theta)\right). \]
With $x=\cos\theta$, we get $x^3 = 2^{-2} \binom{3}{1} T_1(x) + 2^{-2} \binom{3}{0} T_3(x)$. For completeness, we provide a derivation of~\eqref{eqn:xncheby} in Appendix~\ref{app:der}. It is important to note here that the series in~\eqref{eqn:xncheby} is finite, but can be truncated to obtain an accurate approximation; see Section~\ref{sec:main}.

Chebyshev polynomials have many applications in approximation theory and numerical analysis~\cite{trefethen2013approximation} but we limit ourselves to two such examples here. First, if the function is differentiable $r$ times or analytic, the Chebyshev coefficients exhibit decay (algebraic or geometric respectively). Therefore, the Chebyshev series can be truncated to obtain an  polynomial approximation of the function and the accuracy of the approximation depends on the rate of decay of the coefficients. Another application of Chebyshev polynomials is in the theory and practice of polynomial interpolation. The polynomial ${q}_{n-1}^*(x) = x^{n} - 2^{1-n}T_{n}(x)$ solves the minimax problem 
\begin{equation}\label{eqn:minmax} \min_{q \in \mathcal{P}_{n-1}} \|x^{n} - q(x)\|_\infty = 2^{1-n}. \end{equation}
Based on the minimax characterization, to interpolate a function over $[-1,1]$ by a polynomial of degree $n-1$, the function to be interpolated  should be evaluated at the roots of the Chebyshev polynomial $T_{n}$ given by the points $x_j = \cos\left(\frac{2j+1}{2n }\pi\right)$ for $j=0,\dots,n-1$.

\section{Main result}\label{sec:main}
We construct the polynomial approximation $ x^n \approx \phi_k(x)$ by truncating the Chebyshev polynomial expansion in~\eqref{eqn:xncheby} beyond the term $j=k$. That is \[\phi_k(x) := \sum_{j=0}^k{}^{'}c_jT_j(x).\]
Our main result is the following theorem, which quantifies the error in the polynomial approximation. The proof of this theorem is based on the expression in~\eqref{eqn:xncheby}. We believe this result is new.
\begin{theorem}\label{thm:main}
The error in the polynomial approximation $\phi_k(x)$ satisfies 
\[  \|x^n -  \phi_k(x)\|_\infty = p_{n,k}. \]
\end{theorem}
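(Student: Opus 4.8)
The plan is to work directly with the exact finite Chebyshev expansion~\eqref{eqn:xncheby}. Subtracting the truncation $\phi_k$ from $x^n$ leaves precisely the discarded tail,
\[ x^n - \phi_k(x) = \sum_{j=k+1}^{n} c_j T_j(x), \]
where the prime on the sum is now irrelevant since $k \geq 0$ means the index $j=0$ no longer appears. The first step is to bound the supremum norm from above by the triangle inequality together with $\|T_j\|_\infty = 1$ (immediate from $T_j(x) = \cos(j\arccos x)$), which gives $\|x^n - \phi_k(x)\|_\infty \leq \sum_{j=k+1}^n |c_j|$.

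Next I would show this bound is attained, which upgrades the inequality to an equality. The key observation is that every nonzero coefficient $c_j$ in~\eqref{eqn:cj} is strictly positive, being a positive multiple of a binomial coefficient, and that $T_j(1) = \cos 0 = 1$ for every $j$. Hence evaluating the tail at the endpoint $x = 1$ yields $\sum_{j=k+1}^n c_j T_j(1) = \sum_{j=k+1}^n c_j = \sum_{j=k+1}^n |c_j|$, so the upper bound is achieved and $\|x^n - \phi_k(x)\|_\infty = \sum_{j=k+1}^n c_j$.

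It then remains to identify this sum of coefficients with $p_{n,k}$. Using~\eqref{eqn:cj}, only indices $j$ with $n-j$ even contribute; substituting $m = (n-j)/2$ converts $\sum_{j=k+1}^n c_j$ into $2^{1-n}\sum_{m=0}^{M} \binom{n}{m}$ for the appropriate cutoff $M = \lfloor (n-k-1)/2 \rfloor$, and then the symmetry $\binom{n}{m} = \binom{n}{n-m}$ rewrites this as a sum over the top binomial coefficients, matching the definition of $p_{n,k}$.

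The only delicate part — which I expect to be error-prone in the bookkeeping rather than conceptually hard — is verifying the index arithmetic: that the parity constraint ``$n-j$ even with $k+1 \leq j \leq n$'' produces exactly the cutoff $M = \lfloor (n-k-1)/2 \rfloor$, and that after applying binomial symmetry the lower summation limit becomes $n - M = \lfloor (n+k)/2 \rfloor + 1$, as in $p_{n,k}$. This reduces to the elementary identity $n - \lfloor (n+k)/2 \rfloor - 1 = \lfloor (n-k-1)/2 \rfloor$, which I would confirm by splitting into cases according to the parity of $n+k$ (equivalently, using $n - \lfloor (n+k)/2\rfloor = \lceil (n-k)/2\rceil$ and $\lfloor (a-1)/2\rfloor = \lceil a/2\rceil - 1$).
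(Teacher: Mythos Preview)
Your proposal is correct and follows essentially the same route as the paper: write the error as the Chebyshev tail $\sum_{j=k+1}^n c_jT_j(x)$, bound it above by $\sum_{j=k+1}^n c_j$ via the triangle inequality and $|T_j|\le 1$, observe that equality is attained at $x=1$ since the $c_j$ are nonnegative and $T_j(1)=1$, and then reindex the sum of binomial coefficients to identify it with $p_{n,k}$. The only cosmetic difference is that the paper performs the reindexing via the symmetry $\binom{n}{(n-j)/2}=\binom{n}{(n+j)/2}$ and then relabels $(n+j)/2$ as the new summation index, whereas you substitute $m=(n-j)/2$ first and apply binomial symmetry afterward; the floor-arithmetic you outline is exactly what is needed and your parity case split is the right way to verify it.
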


\begin{proof}
 From~\eqref{eqn:xncheby}, $x^{n} -  \phi_k(x) = \sum_{j=k+1}^nc_jT_j(x)$. Using triangle inequality, we find that $\|x^n-\phi_k(x)\|_\infty \leq \sum_{j=k+1}^n c_j$ since the coefficients are nonnegative and the Chebyshev polynomials are bounded as $|T_j(x)| \leq 1$. Substituting the coefficients $c_j$ from~\eqref{eqn:cj}, to get 
\begin{equation}
    \label{eqn:inter}
    \|x^n-\phi_k(x)\|_\infty \leq \frac{1}{2^{n-1}}\sum_{\substack{j=k+1 \\ n-j \text{ even}}}^n \binom{n }{(n-j)/2}.
\end{equation}
Using the properties of the binomial coefficients, the summation simplifies as
\[ 
\sum_{\substack{j=k+1 \\ n-j \text{ even}}}^n \binom{n }{(n-j)/2} =  \sum_{\substack{j=k+1 \\ n+j \text{ even}}}^n \binom{n }{(n+j)/2} =\sum_{j=\lfloor (n+k)/2\rfloor+1}^n \binom{n }{j}. \]
Plug this identity into~\eqref{eqn:inter} to get $\|x^n -  \phi_k(x)\|_\infty \leq p_{n,k}$. The bound is clearly achieved at $x= 1$, where all the Chebyshev polynomials take the value $1$. 
\end{proof}
This theorem shows that the polynomial approximation $\phi_k$ is nearly optimal, and the error due to this approximation is $p_{n,k}$. However, it is the optimal polynomial for the special case $k=n-1$. It is easy to see that $x^n - \phi_{n-1}(x) = 2^{1-n}T_n(x)$ and so $\phi_{n-1}$ is the same as the best polynomial approximation $q^*_{n-1}$ in~\eqref{eqn:minmax}. For $k < n-1$, from~\eqref{eqn:newmanrivlin} and Theorem~\ref{thm:main}
\[ \frac{ \|x^n-\phi_k(x)\|_\infty}{4e} \leq  \|x^n-\pi_k^*(x)\|_\infty \leq  \|x^n-\phi_k(x)\|_\infty, \]
so that the error in the Chebyshev polynomial approximation is suboptimal by at most the factor $4e \approx 10.87$. Therefore, by using $\phi_k$ we lose only one significant digit of accuracy compared to $\pi^*_k$. 

\section{A probabilistic digression}\label{sec:prob}
In Section~\ref{sec:intro}, we saw that the error in the monomial approximation depends on  $p_{n,k}$. Since $p_{n,k}$ depends on the sum of binomial coefficients, it has a probabilistic interpretation. Newman and Rivlin~\cite{newman1976approximation} observed that if a fair coin is tossed $n$ times, $p_{n,k}$ is the probability that the magnitude of the difference between the number of heads and the number of tails exceeds $k$. They used this insight along with the de Moivre-Laplace theorem~\cite[Section 1.3]{vershynin2018high} (which is a special case of the Central Limit Theorem) to obtain the approximation $p_{n,k} \approx 2 \text{erfc}(k/\sqrt{n})$.

To convert this into a rigorous inequality for $p_{n,k}$ we use a different tool from probability, namely, concentration inequalities. The inequalities are useful in quantifying how much a random variable deviates from its mean.  We start with the following alternative interpretation for $p_{n,k}$: it is twice the probability that greater than   $\lfloor (n+k)/2 \rfloor$ coin tosses result in heads (or equivalently tails). We associate each coin toss with an independent Bernoulli random variable $X_i$ with parameter $p=1/2$ since the coin is fair. The random variable $X = \sum_{i=1}^nX_i$ has the Binomial distribution with parameters $n$ and $p$. Then, 
\[ p_{n,k} =  2\mathbb{P}\left( \lfloor (n+k)/2 \rfloor +1 \leq X \leq n\right)  \leq 2\mathbb{P}\left(X  \geq (n+k)/2 \right). \] 
Since $X$ has the Binomial distribution, we can once again use the de Moivre-Laplace theorem, to say that as $n\rightarrow \infty$, 
\[ \frac{X - np}{\sqrt{np(1-p)}} \longrightarrow \mathcal{N}(0,1), \qquad \text{in distribution}.\]
  Roughly speaking, this theorem says that $X$ behaves as a normal random variable with mean $n/2$ and variance $n/4$. Since the tails of normal distributions decay exponentially, we expect that $X$ lies in the range $\frac{n}{2} \pm 1.96 \sqrt{\frac{n}{4}}$ with nearly $95\%$ probability; alternatively, the probability that it is outside this range is very small. To make this more precise, we apply Hoeffding's concentration inequality~\cite[Theorem 2.2.6]{vershynin2018high}, to obtain
\[  \mathbb{P}\left(X  \geq (n+k)/2 \right) = \mathbb{P}\left(X - \mathbb{E}[X] \geq k/2 \right) \leq \exp\left(-\frac{k^2}{2n}\right). \]
This gives our desired bound $p_{n,k} \leq 2\exp(-{k^2}/{2n})$.

We can use a similar technique to prove the following result which may be of independent interest. If $0 \leq k \leq n/2$, then 
\[ \sum_{j=0}^k \binom{n}{j} \leq 2^n \exp\left( - \frac{(n-2k)^2}{2n}\right).\]
 Other concentration inequalities such as Chernoff and Bernstein (see~\cite[Chapter 2]{vershynin2018high}) also give equally interesting bounds. We invite the reader to explore such results.

\section{Acknowledgements}
The author would like to thank Alen Alexanderian, Ethan Dudley, Ivy Huang, Ilse Ipsen, and Nathan Reading for comments and feedback. The work was supported by the National Science Foundation through the grants DMS-1745654 and DMS-1845406.
\section{Declaration of Interest}
The author has no relevant financial or non-financial competing interests to report.

\appendix
\section{Derivation of the monomial expansion}\label{app:der}
In this appendix, we provide a short derivation of~\eqref{eqn:xncheby}. We take $x = \cos \theta$ and write 
\[  \cos^n\theta = \left(\frac{e^{i\theta} + e^{-i\theta}}{2}\right) = \frac{1}{2^n} \sum_{j=0}^n\binom{n}{j}e^{ij\theta}e^{-i(n-j)\theta} = \frac{1}{2^n} \sum_{j=0}^n\binom{n}{j}e^{i(2j-n)\theta}. \]
We have used Euler's formula and the binomial theorem. At this point, the derivation splits into two different paths:
\begin{description}
\item[1. $n$ is odd] 
\[ \begin{aligned}\cos^n\theta  =  & \> \frac{1}{2^n} \left( \sum_{j=0}^{(n-1)/2} \binom{n}{j}e^{i(2j-n)\theta}  + \sum_{j=(n+1)/2}^{n} \binom{n}{j}e^{i(2j-n)\theta}\right) \\
= & \> \frac{1}{2^{n-1}} \sum_{j=0}^{(n-1)/2}\binom{n}{j}\cos((n-2j)\theta) \\
= & \> \frac{1}{2^{n-1}}\sum_{\substack{j=0 \\ n-j \text{ even}}}^n \binom{n}{(n-j)/2} \cos(j\theta) .\end{aligned}\]
\item [2. $n$ is even] 
\[ \begin{aligned}\cos^n\theta  =  & \> \frac{1}{2^n} \left( \sum_{j=0}^{n/2-1} \binom{n}{j}e^{i(2j-n)\theta}  + \binom{n}{n/2} +  \sum_{j=n/2+1}^{n} \binom{n}{j}e^{i(2j-n)\theta}\right) \\
= & \>  \frac{1}{2^n} \binom{n}{n/2} + \frac{1}{2^{n-1}} \sum_{j=0}^{n/2-1}\binom{n}{j}\cos((n-2j)\theta) \\
= & \> \frac{1}{2^{n-1}}\sum_{\substack{j=0 \\ n-j \text{ even}}}^n{}^{'} \binom{n}{(n-j)/2} \cos(j\theta)  .\end{aligned}\] 
\end{description} 
In either case, substitute $x = \cos\theta$ and $T_j(x) = \cos(j\theta)$ to complete the derivation.

\bibliography{refs}
\bibliographystyle{abbrv}
\end{document}